\newcommand{\bbC}{\mathbb{C}}
\newcommand{\bbQ}{\mathbb{Q}}
\newcommand{\bbZ}{\mathbb{Z}}
\newcommand{\Tr}{\textup{Tr}}
\newtheorem{theorem}{Theorem}[section]
\newtheorem{lemma}[theorem]{Lemma}
\newtheorem{proposition}[theorem]{Proposition}
\theoremstyle{definition}     
\newtheorem{remark}[theorem]{Remark}
\numberwithin{equation}{section}
\begin{document}

\title[ K3 surfaces with an order 50 automorphism]{ K3 surfaces with an order 50 automorphism}

\author[J. Keum]{JongHae Keum}
\address{School of Mathematics, Korea Institute for Advanced Study, Seoul 130-722, Korea } \email{jhkeum@kias.re.kr}
\thanks{Research supported by National Research Foundation of Korea (NRF grant).}

\subjclass[2000]{Primary 14J28, 14J50, 14J27}

\date{Dec 2013}
\begin{abstract} In any characteristic $p$ different from 2 and 5, Kond\=o gave an example of a K3 surface with a 
purely non-symplectic automorphism of order 50. The surface was explicitly given as a double plane branched along a smooth sextic curve. In this note we show that, in any characteristic $p\neq 2, 5$, a K3 surface with a cyclic action of order 50 is isomorphic to the example of  Kond\=o. 
\end{abstract}

\maketitle

Let $X$ be a K3 surface over an algebraically closed field $k$ of
characteristic $p\ge 0$. An automorphism $g$ of $X$ is called
\emph{symplectic} if it preserves any regular 2-form on $X$, and
 \emph{purely non-symplectic} if no power of $g$ is symplectic except the identity.

In any characteristic $p\ge 0$, $p\neq 2, 5$, Kond\=o \cite{Ko} gave an example of a K3 surface $X_{50}$ with a 
purely non-symplectic automorphism $g_{50}$ of order 50:
\begin{equation}\label{formula}
X_{50}= (w^2=x^6+xy^5+yz^5)\subset {\bf P}(1,1,1,3),
 \end{equation}
\begin{equation}\label{form}
 g_{50}(x,\,y,\, z,\, w)=(x,\, \zeta_{50}^{40}y,\, \zeta_{50}^{2}z,\,
\zeta_{50}^{25}w)
 \end{equation}
 where $\zeta_{50}\in k$ is a primitive 50th  root of unity. 
In characteristic $p=2$ and 5 the automorphism degenerates and the equation does not even define a K3 surface.

The result of this short note is the following.

\begin{theorem} \label{main} Let $k$ be an algebraically closed
field of characteristic $p\neq 2$, $5$. Let $X$ be a K3 surface
defined over $k$ with an automorphism $g$ of order $50$. Then
\begin{enumerate}
\item $g$ is purely non-symplectic;
\item the pair $(X, \langle g\rangle)$ is isomorphic to the pair $(X_{50},  \langle g_{50}  \rangle)$.
\end{enumerate}
\end{theorem}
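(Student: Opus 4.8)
The plan is to first pin down the action of $g$ on cohomology and thereby prove (1), then use the involution $g^{25}$ to realize $X$ as a double plane and recover Kond\=o's equation for (2).

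For (1): since $p\neq 2,5$ the order $50$ is prime to $p$, so $g$ is tame and acts faithfully on the $\ell$-adic cohomology $H^2(X,\bbQ_\ell)$ (a numerically trivial automorphism of a K3 surface is trivial). As $g$ has finite order and preserves the integral lattice, the characteristic polynomial of $g^*$ is a product $\prod_{d\mid 50}\Phi_d^{a_d}$ of cyclotomic polynomials of total degree $22$. The divisors of $50$ carry degrees $\phi(d)$ equal to $1$ for $d=1,2$, to $4$ for $d=5,10$, and to $20$ for $d=25,50$. Faithfulness forces $g^*$ to have order $50$, so some eigenvalue has order divisible by $25$; since $\phi(25)=\phi(50)=20$ and $2\cdot 20>22$, exactly one of $\Phi_{25},\Phi_{50}$ occurs and with multiplicity one, leaving degree $2$ for $\Phi_1,\Phi_2$ (the degree-$4$ factors $\Phi_5,\Phi_{10}$ cannot fit). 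Let $m$ be the order of the scalar $\alpha(g)$ by which $g$ acts on $H^0(X,\Omega^2_X)$; then the symplectic powers of $g$ form $\langle g^m\rangle$, of order $50/m$. A tame symplectic automorphism of a K3 surface has order at most $8$ (Nikulin; Dolgachev--Keum in positive characteristic), so $50/m\le 8$, i.e.\ $m\in\{10,25,50\}$. If $m=10$ then $\Phi_{10}$ (degree $4$) and one of $\Phi_{25},\Phi_{50}$ (degree $20$) both divide the characteristic polynomial, giving degree $\ge 24>22$, a contradiction.

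It remains to exclude $m=25$. In that case $\Phi_{50}$ is absent (else degree $\ge 40$), so $\iota:=g^{25}$ is a nontrivial symplectic involution, since $\alpha(\iota)=\alpha(g)^{25}=1$. Its $(-1)$-eigenspace on $H^2(X,\bbQ_\ell)$ is exactly the $\Phi_2$-part of $g$, because a primitive $25$th root raised to the $25$th power is $1$ while $-1$ stays $-1$; hence this eigenspace has dimension $a_2\le 2$. But a tame symplectic involution of a K3 surface has anti-invariant lattice isometric to $E_8(-2)$, of rank $8$ (Nikulin; Dolgachev--Keum), so the $(-1)$-eigenspace must have dimension $8$. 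Since $a_2<8$ this is impossible, so $m=50$: the scalar $\alpha(g)$ is a primitive $50$th root of unity and $g$ is purely non-symplectic, proving (1). Moreover the characteristic polynomial equals $\Phi_{50}\,\Phi_1^{a_1}\Phi_2^{a_2}$ with $a_1+a_2=2$, and $a_1\ge 1$ because the $g$-invariant part contains an ample class.

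For (2): now $\alpha(\iota)=\zeta_{50}^{25}=-1$, so $\iota=g^{25}$ is a \emph{non-symplectic} involution; being tame, its fixed locus is a disjoint union of smooth curves and the quotient $Y:=X/\iota$ is a smooth rational surface, with $X\to Y$ the double cover branched along the fixed curve. The invariant lattice $(H^2)^{\iota}$ has rank $a_1$ and is $2$-elementary. Determining $a_1$ and the fixed configuration via the holomorphic and topological Lefschetz fixed-point formulas for $g$ and its powers, I would show $(H^2)^{\iota}\cong\langle 2\rangle$, so that $a_1=1$, $Y\cong\bbP^2$, and the branch curve is a smooth sextic $C\subset\bbP^2$. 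Since $g$ commutes with $\iota$ it descends to $\bar g\in\Aut(\bbP^2)=\PGL_3(k)$ of order $25$ preserving $C$.

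Finally I would classify the order-$25$ elements of $\PGL_3(k)$, which (as $p\neq 5$) are diagonalizable as $\mathrm{diag}(1,\zeta_{25}^{a},\zeta_{25}^{b})$, together with their invariant \emph{smooth} sextics. Requiring all monomials $x^iy^jz^k$ of the defining form to have a common weight $aj+bk \bmod 25$, while the sextic stays smooth, is highly restrictive, and I expect it to force, up to a projective change of coordinates and a rescaling, $\bar g=\mathrm{diag}(1,\zeta_{25}^{20},\zeta_{25})$ and $C=\{x^6+xy^5+yz^5=0\}$. Reconstructing $X$ as the double cover $w^2=x^6+xy^5+yz^5$ in $\bbP(1,1,1,3)$ and lifting $\bar g$ together with the deck transformation $w\mapsto -w$ would then identify $(X,\langle g\rangle)$ with $(X_{50},\langle g_{50}\rangle)$. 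The main obstacle is precisely this last classification step---showing that an order-$25$ projective automorphism preserves a smooth sextic \emph{only} in Kond\=o's configuration, and that the invariant sextic is then unique up to coordinates---together with the characteristic-free determination $a_1=1$ identifying the quotient as $\bbP^2$.
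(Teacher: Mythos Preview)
Your argument for (1) is correct and parallels what the paper cites from \cite{K}: cyclotomic degree-counting on $H^2$, the order bound $\le 8$ for tame symplectic automorphisms, and the rank-$8$ anti-invariant constraint for a symplectic involution together force $m=50$. (Your justification ``numerically trivial implies trivial'' is not quite the right statement for faithfulness on $H^2_{\rm et}$, but the faithfulness itself holds.)

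For (2) your outline is right, but the two steps you yourself flag as obstacles are where all the content lies, and the tools you propose for them---the holomorphic Lefschetz formula and the $2$-elementary lattice classification---are precisely what the paper avoids in order to work uniformly in characteristic $p\neq 2,5$. To exclude $a_1=2$ the paper (Lemma~\ref{no1}) argues geometrically: if $[g^*]=[1,1,\zeta_{50}{:}20]$ then $e(g^{25})=-16$, $e(g^5)=9$, $e(g)=4$, and by Lemma~\ref{nsym2} either ${\rm Fix}(g^{25})$ is a genus-$9$ curve, on which Hurwitz for the degree-$25$ quotient by $g$ (with $4$ points of ramification index $25$ and $5$ of index $5$) fails, or it contains a section of a $g$-invariant elliptic fibration whose $\ge 12$ singular fibres each contribute a fixed point of $g^5$, contradicting $e(g^5)=9$. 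For the sextic, rather than classifying order-$25$ subgroups of $\PGL_3$ with an invariant smooth sextic in the abstract, the paper (Lemma~\ref{50}) first computes via topological Lefschetz and the Hodge index theorem the full fixed-locus tower: ${\rm Fix}(g^{25})=C_{10}$, ${\rm Fix}(g^{10})=D_2\cup\{q\}$ with $C_{10}D_2=6$, ${\rm Fix}(g^5)$ seven points, ${\rm Fix}(g)$ two points. Pushed down to $\bbP^2=X/\langle g^{25}\rangle$, this data locates the three isolated fixed points of $\bar g$ and hence its diagonal form $\mathrm{diag}(1,\zeta_{25}^{20},\zeta_{25})$ in suitable coordinates; smoothness then forces the branch to be $x^6+xy^5+yz^5$. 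The missing ingredient in your plan is exactly this fixed-locus bookkeeping, which replaces the transcendental and lattice-theoretic inputs you invoke.
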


The first statement of  Theorem \ref{main} was proved in a previous paper \cite[Lemmas 4.2 and 4.7]{K}.

Over $k=\bbC$ the second statement of Theorem \ref{main} was proved by Machida and Oguiso \cite{MO}, under the assumption that $g$ is purely non-symplectic. Our proof is characteristic free, does not use lattice theory and the holomorphic Lefschetz formula.

A similar characterization of K3 surfaces with a tame cyclic action of order 60 (resp. 66) was given in \cite{K60} (resp. \cite{K66}), where it was proven that for such a pair $(X, \langle g\rangle)$ the K3 surface $X$ admits a $g$-invariant elliptic fibration, thus can be given by a $g$-invariant Weierstrass equation. The case of order 50 is similar to the case or order 40 in \cite{K40}, and the K3 surface admits a $g$-invariant double plane presentation. 

\begin{remark} (1) In characteristic 5 it was shown in  the previous paper \cite[Main Theorem and Lemma 9.6]{K} that no K3 surface admits a cyclic action of order 25.

(2) In characteristic 2  there is a K3 surface with a cyclic action of order 50: 
\begin{equation}\label{formula2}
Y=(w^2+x^3w=x^6+xy^5+yz^5)\subset {\bf P}(1,1,1,3),
 \end{equation}
\begin{equation}\label{form2}
 f_{50}(x,\,y,\, z,\, w)=(x,\, \zeta_{25}^{20}y,\, \zeta_{25}^{}z,\,w+x^3)
 \end{equation}
  where $\zeta_{25}\in k$ is a primitive 25th  root of unity. Is this the unique pair up to isomorphism in characteristic 2? 
\end{remark}

\bigskip

{\bf Notation}

\bigskip
For a variety $X$ with an automorphism $g$, we use the following notation:
\medskip
\begin{itemize} 
\item ${\rm NS}(X)$ : the N\'eron-Severi group of $X$;
\medskip
\item $X^g={\rm Fix}(g)$ : the fixed locus of $g$ in $X$;
\medskip
\item $e(g):=e({\rm Fix}(g))$, the Euler characteristic of ${\rm Fix}(g)$ for $g$ tame;
\medskip
\item $\Tr(g^*|H^*(X)):=\sum_{j=0}^{2\dim X} (-1)^j\Tr (g^*|H^j_{\rm et}(X,{\bbQ}_l))$;
\medskip
\item $[g^*]=[\lambda_1, \ldots, \lambda_{b_2}]$ : the list of eigenvalues of $g^*|H^2_{\rm et}(X,{\bbQ}_l)$ where $b_2$ is the second Betti number of $X$;
\medskip
\item $\zeta_a$ : a primitive $a$-th  root of unity in $\overline{\bbQ_l}$;
\medskip
\item $[\zeta_a:\phi(a)]\subset [g^*]$ : all primitive $a$-th  roots of unity appear in $[g^*]$ where $\phi(a)$ indicates the number of
them;
\medskip
\item $[\lambda.r]\subset [g^*]$ : the eigenvalue $\lambda$ repeats $r$ times in
$[g^*]$;
\medskip
\item $[(\zeta_a:\phi(a)).r]\subset [g^*]$ : the list $\zeta_a:\phi(a)$ repeats $r$ times in
$[g^*]$.
\end{itemize}

\section{Preliminaries}

 We first recall the following basic
result used in the paper \cite{K}.

\begin{proposition}\label{integral}$($See  \cite[Proposition 2.1]{K}.$)$ Let $g$ be an automorphism of a projective variety $X$ over an algebraically closed field $k$ of
characteristic $p> 0$. Let $l$ be a prime $\neq p$. Then the following hold true.
\begin{enumerate}
\item $($See \cite [3.7.3]{Illusie}.$)$ The characteristic polynomial of
$g^*|H_{\rm et}^j(X,\bbQ_l)$ has integer coefficients for each
$j$. The characteristic polynomial does not depend on the choice of  cohomology, $l$-adic or crystalline. In particular, if a primitive
$m$-th root of unity appears with multiplicity $r$ as an
eigenvalue of $g^*|H_{\rm et}^j(X,\bbQ_l)$, then so does each of
its conjugates.
\item  If $g$ is of finite order, then $g$ has an
invariant ample divisor, and
$g^*|H_{\rm et}^2(X,\bbQ_l)$ has $1$ as an eigenvalue.
\item  If $X$ is a K3 surface, $g$ is tame and $g^*|H^0(X,\Omega_X^2)$ has
$\zeta_n\in k$ as an eigenvalue, then $g^*|H_{\rm et}^2(X,\bbQ_l)$
has $\zeta_n\in \overline{\bbQ_l}$ as an eigenvalue.
\end{enumerate}
\end{proposition}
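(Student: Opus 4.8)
The plan is to handle the three parts in turn, using crystalline cohomology as the bridge that keeps everything characteristic-free.

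For part (1), I would first note that $H^j_{\rm et}(X,\bbZ_l)$ modulo torsion is a finitely generated free $\bbZ_l$-module stable under $g^*$, so the characteristic polynomial of $g^*|H^j_{\rm et}(X,\bbQ_l)$ is monic with coefficients in $\bbZ_l$. The real content is the independence of $l$ and the agreement with the crystalline characteristic polynomial, which is precisely the cited result \cite[3.7.3]{Illusie}; I would invoke it to get a single polynomial representing all the $P_l$. Since that common polynomial has coefficients in $\bigcap_l\bbZ_l=\bbZ$, integrality follows. Granting this, the final assertion is purely Galois-theoretic: a monic integer polynomial has a root set stable under $\Gal(\overline{\bbQ}/\bbQ)$ with multiplicities preserved, and since the primitive $m$-th roots of unity form a single Galois orbit, every conjugate of $\zeta_m$ occurs with the same multiplicity $r$.

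For part (2), I would use the averaging trick: if $H$ is any ample divisor and $n$ is the order of $g$, then $D:=\sum_{i=0}^{n-1}(g^i)^*H$ is a $g$-invariant ample divisor. Its class in $\NS(X)$ is nonzero and $g^*$-fixed, so under the $g$-equivariant cycle class map $\NS(X)\otimes\bbQ_l\hookrightarrow H^2_{\rm et}(X,\bbQ_l(1))$ it gives a nonzero $g^*$-invariant class. As $g$ acts trivially on the Tate twist, the actions on $H^2_{\rm et}(X,\bbQ_l(1))$ and $H^2_{\rm et}(X,\bbQ_l)$ coincide, so $g^*|H^2_{\rm et}(X,\bbQ_l)$ has $1$ as an eigenvalue.

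Part (3) is where I expect the main obstacle: one must transport an eigenvalue living in the coherent cohomology $H^0(X,\Omega_X^2)$ into $l$-adic cohomology across characteristics, whereas in characteristic $0$ this would be immediate from the Hodge decomposition. My plan is to route through crystalline cohomology. The Hodge filtration on $H^2_{\rm dR}(X/k)$, and correspondingly the crystalline Hodge filtration on $H^2_{\rm crys}(X/W)\otimes K$, is $g$-stable with top graded piece $H^0(X,\Omega_X^2)$. Because $g$ is tame, its order is prime to $p$ and its action on $H^2$ is semisimple, so the eigenvalue $\zeta_n$ on the graded piece is genuinely an eigenvalue of $g^*$ on $H^2_{\rm dR}$; the canonical identification of $n$-th roots of unity (prime to $p$) among $k$, $W(k)$, and $\overline{\bbQ_l}$ then lets me lift it to a crystalline eigenvalue, reduction being faithful on prime-to-$p$ roots of unity. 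Part (1) identifies the crystalline and $l$-adic characteristic polynomials, giving $\zeta_n\in\overline{\bbQ_l}$ as an eigenvalue of $g^*|H^2_{\rm et}(X,\bbQ_l)$. The delicate points I would be most careful about are the $g$-stability and semisimplicity supplied by tameness, and the compatibility of the three incarnations of $\zeta_n$.
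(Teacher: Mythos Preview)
The paper does not prove this proposition: it is quoted from \cite[Proposition 2.1]{K} as a preliminary fact, with part (1) further attributed to \cite[3.7.3]{Illusie}, and no argument is supplied in the present paper. So there is no proof here to compare your proposal against.

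Your outline is nonetheless a correct sketch of the standard argument and matches what one finds in the cited sources. One small imprecision in part (1): the identity $\bigcap_l\bbZ_l=\bbZ$ requires $l$ to range over \emph{all} primes, whereas the \'etale side only supplies $l\neq p$; it is the crystalline comparison (which you do invoke) that gives $p$-integrality, so the conclusion stands but that sentence should be tightened. Your treatment of part (3)---passing through the $g$-stable Hodge filtration on $H^2_{\rm dR}(X/k)$, using tameness for semisimplicity so that the eigenvalue on the graded piece $H^0(X,\Omega_X^2)$ is an eigenvalue on the whole $H^2$, and identifying $n$-th roots of unity across $k$, $W(k)$, and $\overline{\bbQ_l}$ via the Teichm\"uller lift---is exactly the intended route.
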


The following is well known, see e.g., \cite[Theorem 3.2]{DL}.

\begin{proposition}\label{trace}$($Lefschetz fixed point formula$)$   Let $X$ be a smooth projective variety over  an algebraically closed field $k$ of
characteristic $p> 0$ and let $g$ be a tame automorphism of $X$. Then $X^g={\rm Fix}(g)$ is smooth and
 $$e(g):=e(X^g)=\Tr(g^*|
H^*(X)).$$
\end{proposition}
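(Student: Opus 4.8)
The plan is to treat the two assertions separately: the smoothness of $X^g$ is a local statement obtained by linearizing the action, while the trace identity is the Grothendieck--Lefschetz formula, whose fixed-point contributions I would evaluate by a d\'evissage resting on the additivity of the equivariant Euler characteristic together with tameness. For smoothness, fix a point $x\in X^g$. Since $g$ has order $n$ prime to $p$, the operator $g^*$ on $\hat{\mathcal{O}}_{X,x}$ is semisimple, its minimal polynomial dividing the separable polynomial $T^n-1$; averaging over $\langle g\rangle$ -- legitimate because $n$ is invertible in $k$ -- lifts an eigenbasis of the cotangent space to a regular system of parameters $t_1,\dots,t_d$ with $g^*t_i=\lambda_i t_i$. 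Formally at $x$ the fixed subscheme is then cut out by $\{t_i=0:\lambda_i\neq 1\}$, which is smooth.

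For the trace identity, fix a prime $l\neq p$ with $l\nmid n$; this is harmless because $T(g,X):=\Tr\big(g^*\mid H^*(X)\big)$ is independent of $l$ by Proposition \ref{integral}. Writing $T(g,Y):=\Tr\big(g^*\mid H^*_c(Y,\bbQ_l)\big)$ for any $g$-stable $Y$, the basic input is that $T(g,-)$ is additive for a $g$-stable decomposition of $Y$ into a closed piece and its open complement, by the excision sequence in compactly supported cohomology. Applied to $X=X^g\sqcup U$ with $U:=X\setminus X^g$, this gives $T(g,X)=T(g,X^g)+T(g,U)$; as $g$ acts trivially on the proper subscheme $X^g$ we have $T(g,X^g)=e(X^g)$, so it remains to prove $T(g,U)=0$. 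Stratifying $U$ by the stabilizer subgroup $G_u\le\langle g\rangle$ of its points produces finitely many $g$-stable locally closed pieces $U_H$, on each of which $H$ acts trivially while the nontrivial cyclic quotient $\langle g\rangle/H$ acts \emph{freely}; by additivity it suffices to show $T(g,U_H)=0$, i.e. the vanishing of $T$ for a free action of a nontrivial cyclic group of order prime to $p$.

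So suppose a cyclic group $\langle g\rangle$ of order $m>1$ (with $m\mid n$, hence $l\nmid m$) acts freely on a variety $V$. Then $\pi\colon V\to W:=V/\langle g\rangle$ is a connected \'etale $\bbZ/m$-cover, $\pi_*\bbQ_l=\bigoplus_{i=0}^{m-1}\mathcal{L}_i$ splits into the rank-one local systems on which $g$ acts by $\zeta_m^{\,i}$, and correspondingly
$$T(g,V)=\sum_{i=0}^{m-1}\zeta_m^{\,i}\,e_c(W,\mathcal{L}_i).$$
Here lies the crux: each $\mathcal{L}_i$ has monodromy of order dividing $m$ and is therefore \emph{tamely} ramified, so its Euler characteristic is insensitive to the coefficients, $e_c(W,\mathcal{L}_i)=\mathrm{rank}(\mathcal{L}_i)\cdot e_c(W)=e_c(W)$. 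Hence $T(g,V)=e_c(W)\sum_{i=0}^{m-1}\zeta_m^{\,i}=0$, which finishes the argument. I expect this Euler-characteristic invariance to be the genuine obstacle: in characteristic $p$ the equality $e_c(W,\mathcal{L})=\mathrm{rank}(\mathcal{L})\cdot e_c(W)$ fails as soon as wild ramification is present, since Grothendieck--Ogg--Shafarevich then contributes Swan conductor terms, and it is exactly the tameness forced by $p\nmid m$ that annihilates them. Proving this tame invariance -- by fibering in curves and invoking Grothendieck--Ogg--Shafarevich, or by citing Illusie -- is the technical heart of the proof.
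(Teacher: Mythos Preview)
The paper does not prove this proposition: it is recorded as well known with a pointer to Deligne--Lusztig \cite[Theorem~3.2]{DL}, and no argument is given. So there is no in-paper proof to compare your proposal against.

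On its own merits your sketch is sound. The smoothness argument via averaging over $\langle g\rangle$ (legitimate since $p\nmid n$) to diagonalize $g^*$ on $\hat{\calO}_{X,x}$ and thereby exhibit $X^g$ locally as a coordinate subspace is the standard one. For the trace identity, the reduction by additivity of $T(g,-)=\Tr\big(g^*\mid H^*_c(-,\bbQ_l)\big)$ and stratification by stabilizer to the case of a \emph{free} action of a nontrivial cyclic group of order prime to $p$ is correct; a minor quibble is that you call $V\to W$ a ``connected'' cover, but connectedness plays no role, since $\pi_*\bbQ_l\cong\bigoplus_\chi\calL_\chi$ is just the isotypic decomposition of the regular representation and holds for any $\bbZ/m$-torsor. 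You correctly isolate the genuine content as the tame invariance $e_c(W,\calL_\chi)=e_c(W)$ for rank-one $\calL_\chi$ with monodromy of order prime to $p$ --- equivalently, multiplicativity of $e_c$ under \'etale covers of degree prime to $p$ --- and this is exactly where the tameness hypothesis enters, via the vanishing of Swan terms in Grothendieck--Ogg--Shafarevich. Deferring that to Illusie or a fibering-in-curves argument is appropriate at this level; it is indeed the technical heart, and your identification of it as such is accurate.
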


\begin{lemma}\label{nsym2}$($See \cite[Lemma 1.6]{K60}.$)$   
Let $X$ be a K3 surface in characteristic $p\neq 2$, admitting an automorphism $h$ of order $2$ with $\dim
H^2_{\rm et}(X,{\bbQ}_l)^h =2$. Then $h$ is non-symplectic and has
an $h$-invariant elliptic fibration $\psi:X\to {\bf P}^1$,  $$X/\langle h\rangle\cong {\bf F}_e$$ a rational ruled surface, and $X^h$ is either a curve of genus
$9$ which is a $4$-section of $\psi$ or the union of a section and a curve of genus $10$ which
is a $3$-section.
In the first case $e=0, 1$ or $2$, and in the second $e=4$. Each singular fibre of $\psi$
is of type $I_1$ $($nodal$)$, $I_2$, $II$ $($cuspidal$)$ or $III$, and is intersected by $X^h$ at the node and two smooth points if  of type $I_1$, at the two singular points if of type $I_2$, at the cusp with multiplicity $3$ and a smooth point if of type $II$, at the singular point tangentially to both components if of type $III$. If $X^h$ contains a section, then each singular fibre
is of type $I_1$ or $II$.
\end{lemma}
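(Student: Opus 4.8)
The plan is to extract everything from the Lefschetz fixed point formula applied to the hypothesis on the invariant subspace, then to identify the quotient as a Hirzebruch surface and read off the elliptic fibration and its degenerations from the branch data of the quotient double cover.

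First I would pin down the eigenvalues. Since $h^2=1$ and $p\neq2$, $h$ is tame and $h^*$ acts on $H^2_{\rm et}(X,\bbQ_l)$ (of dimension $b_2=22$) with eigenvalues $\pm1$; the hypothesis $\dim H^2_{\rm et}(X,\bbQ_l)^h=2$ forces multiplicity $2$ for $+1$ and $20$ for $-1$. Hence $\Tr(h^*|H^*(X))=1+(2-20)+1=-16$, and Proposition \ref{trace} gives $e(X^h)=-16$. Next I would show $h$ is non-symplectic: if $h$ were symplectic then $\det(dh)=1$ at each fixed point, so a fixed curve would force both eigenvalues of $dh$ to be $1$, hence $dh=\mathrm{id}$ and $h=\mathrm{id}$; thus $X^h$ would be finite with $e(X^h)\ge0$, contradicting $-16$. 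Therefore $h^*$ acts by $-1$ on $H^0(\Omega_X^2)$, every fixed point has $\det(dh)=-1$ with eigenvalues $1,-1$, and $X^h=\bigsqcup_i C_i$ is a disjoint union of smooth curves with $\sum_i(2-2g(C_i))=-16$, i.e. $\sum_i(g(C_i)-1)=8$.

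Then I would analyse the quotient $Y=X/\langle h\rangle$, which is smooth because the fixed locus is a smooth divisor. As $h$ acts by $-1$ on $H^{2,0}$ we get $p_g(Y)=h^0(\Omega_X^2)^h=0$, $q(Y)=0$, and $P_2(Y)=h^0(2K_Y)=h^0(-B)=0$, so $Y$ is rational by Castelnuovo's criterion; since $b_i(Y)=\dim H^i_{\rm et}(X,\bbQ_l)^h$ we have $b_2(Y)=2$, whence $Y\cong{\bf F}_e$ for some $e\ge0$. Write $\rho:{\bf F}_e\to{\bf P}^1$ for the ruling, $f$ the fibre class, $C_0$ the section with $C_0^2=-e$, and $K=-2C_0-(e+2)f$. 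Let $\pi:X\to{\bf F}_e$ be the double cover branched along $B=\pi(X^h)$; since $X,{\bf F}_e$ are smooth, $B$ is smooth, and $K_X=\pi^*(K+\tfrac12 B)=0$ gives $B\sim-2K=4C_0+(2e+4)f$, so $B\cdot f=4$. The composite $\psi=\rho\circ\pi:X\to{\bf P}^1$ is then $h$-invariant with general fibre the double cover of a ruling line branched at the four points $B\cap f$, hence a smooth genus-$1$ curve; so $\psi$ is an $h$-invariant elliptic fibration and $X^h$ is a $4$-section.

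The two cases come from the possible smooth (hence disjoint-component) decompositions of $B$. If $C_0\not\subset B$ then $B\cdot C_0=4-2e\ge0$ gives $e\le2$, and an intersection count on ${\bf F}_e$ rules out any splitting into disjoint multisections (two $2$-sections meet in $8$ points; a section $\neq C_0$ disjoint from a $3$-section would need class $C_0+\tfrac{e-4}{2}f$, which for $m<e$ contains $C_0$); thus $B$ is irreducible with $2g(B)-2=B\cdot(B+K)=2K^2=16$, a single genus-$9$ $4$-section with $e\in\{0,1,2\}$. Otherwise $C_0\subset B$, and disjointness forces $B=C_0\sqcup D$ with $D=3C_0+(2e+4)f$ and $C_0\cdot D=4-e=0$, hence $e=4$; then $\pi^{-1}(C_0)$ is a section and $D$ a genus-$10$ $3$-section, the irreducibility of the section being what pins $e$ down. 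Finally the singular fibres of $\psi$ are governed by collisions of the four branch points $B\cap f_t$ as $f_t$ moves: the partitions $2{+}1{+}1$, $2{+}2$, $3{+}1$, $4$ of the contact multiplicities with the smooth curve $B$ give, via the local model $y^2=\prod(x-b_i)$, exactly the fibres $I_1$, $I_2$, $II$, $III$, and tracking the ramification $\{y=0\}$ reproduces the stated incidences of $X^h$ (node plus two points; two nodes; cusp of multiplicity $3$ plus one point; the tangential double point). When $X^h$ contains a section, one branch point lies on $C_0$ and stays disjoint from the three on $D$, so only the partitions $2{+}1{+}1$ and $3{+}1$ occur, forcing types $I_1$ and $II$. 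I expect this last, most computational step to be the main obstacle: one must use smoothness of $B$ to bound the contact orders so that no further Kodaira types (e.g. $I_n$, $n\ge3$) arise, and verify each local type and incidence; the identification $Y\cong{\bf F}_e$ in positive characteristic, resting on Castelnuovo's criterion and the surface classification, is the other point requiring care.
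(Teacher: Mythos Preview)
The paper does not prove this lemma; it is quoted from \cite[Lemma~1.6]{K60} and used as a black box, so there is no in-paper argument to compare your proposal against.

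Your reconstruction is correct and follows the expected line: compute $e(X^h)=-16$ from the eigenvalue hypothesis, deduce non-symplecticity, identify the smooth quotient as a rational surface with $b_2=2$ hence some $\mathbf{F}_e$, read off $B\sim -2K_{\mathbf{F}_e}$ and the elliptic pencil from the ruling, and then sort the smooth divisors in $|{-2K}|$ and the collisions of the four branch points on a ruling line. Two small points to tighten. First, in the $e=4$ case you should also check that the residual $3$-section $D\sim 3C_0+12f$ is irreducible: any section of $\mathbf{F}_4$ disjoint from $C_0$ has class $C_0+4f$, and two such meet in $(C_0+4f)^2=4>0$ points, so $D$ cannot split further into disjoint smooth pieces. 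Second, your line $P_2(Y)=h^0(-B)$ is a slip; what you mean is $2K_Y\sim -B$ with $B$ a nonzero effective divisor, hence $h^0(2K_Y)=0$. The singular-fibre analysis is right: smoothness of $B$ forces each contact with a ruling to be unibranch, so only the partitions $2{+}1{+}1$, $2{+}2$, $3{+}1$, $4$ of $B\cdot f=4$ occur, giving exactly $I_1,I_2,II,III$ via the local model $y^2=\prod(x-b_i)$, and in the $e=4$ case the branch point on $C_0$ stays separate from the three on $D$, leaving only $I_1$ and $II$.
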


\begin{remark} If $e\neq 0$, the $h$-invariant elliptic fibration $\psi$ is the pull-back of the unique ruling of ${\bf F}_e$. If $e=0$, either ruling of ${\bf F}_0$ lifts to an $h$-invariant elliptic fibration.
\end{remark}

The following easy lemmas also will be used frequently.

\begin{lemma}\label{fix}$($See \cite[Lemma 2.10]{K}.$)$ Let $S$ be a set and ${\rm Aut}(S)$ be the group of bijections of $S$. For any $g\in {\rm Aut}(S)$ and positive integers $a$ and $b$,
\begin{enumerate}
\item ${\rm Fix}(g)\subset {\rm Fix}(g^a)$;
\item ${\rm Fix}(g^a)\cap {\rm Fix}(g^b)={\rm Fix}(g^d)$ where $d=\gcd (a, b)$;
\item ${\rm Fix}(g)= {\rm Fix}(g^a)$ if ${\rm ord}(g)$ is finite and  prime to $a$.
\end{enumerate}
\end{lemma}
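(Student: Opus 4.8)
The plan is to verify the three assertions in order, using only the group structure of $\langle g\rangle$ acting on $S$; the single substantive device is B\'ezout's identity, and parts (2) and (3) will reduce to it. For (1), I would note that any $x$ with $g(x)=x$ satisfies $g^a(x)=x$ by an immediate induction on $a$, so $x\in {\rm Fix}(g^a)$; this gives ${\rm Fix}(g)\subset {\rm Fix}(g^a)$. Applying the same reasoning to $g^d$ in place of $g$ yields the slightly more general fact that ${\rm Fix}(g^d)\subset {\rm Fix}(g^{dk})$ for every positive integer $k$, which I will reuse below.

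For (2), the inclusion $\supseteq$ follows from this observation: writing $a=dk$ and $b=dl$ with $d=\gcd(a,b)$, we get ${\rm Fix}(g^d)\subset {\rm Fix}(g^a)$ and ${\rm Fix}(g^d)\subset {\rm Fix}(g^b)$, hence ${\rm Fix}(g^d)\subset {\rm Fix}(g^a)\cap {\rm Fix}(g^b)$. For the reverse inclusion I would invoke B\'ezout: choose integers $m,n$ with $d=ma+nb$. Since $g$ is a bijection, negative powers are legitimate, and $g^a(x)=x$ forces $g^{-a}(x)=x$ as well; thus for any $x\in {\rm Fix}(g^a)\cap {\rm Fix}(g^b)$ one computes $g^d(x)=(g^a)^m(g^b)^n(x)=x$, placing $x$ in ${\rm Fix}(g^d)$.

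For (3), the cleanest route is to deduce it from (2). With $n={\rm ord}(g)$ finite and $\gcd(n,a)=1$, I would apply (2) to the pair $(a,n)$: since $\gcd(a,n)=1$ this yields ${\rm Fix}(g^a)\cap {\rm Fix}(g^n)={\rm Fix}(g)$. But $g^n={\rm id}$, so ${\rm Fix}(g^n)=S$ and the intersection on the left is simply ${\rm Fix}(g^a)$; hence ${\rm Fix}(g^a)={\rm Fix}(g)$, as claimed.

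I expect no genuine obstacle here, since the lemma is purely formal. The one point meriting care is the use of negative exponents in the B\'ezout step of (2): this is precisely where the hypothesis that $g$ is a bijection (rather than an arbitrary self-map of $S$) enters, since it is what guarantees that $g^a(x)=x$ is equivalent to $g^{-a}(x)=x$ and hence that the integer combination $g^{ma+nb}$ is well defined and fixes $x$.
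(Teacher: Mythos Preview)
Your proof is correct and entirely standard. The paper itself does not supply a proof of this lemma at all; it merely states the result and cites \cite[Lemma~2.10]{K} for it. Your argument via B\'ezout's identity is exactly the expected one, and your remark that bijectivity of $g$ is what licenses negative exponents in part~(2) is the only point worth flagging.
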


\begin{lemma}\label{sum}$($See \cite[Lemma 2.11]{K}.$)$ Let $R(n)$ be the sum of all primitive $n$-th root of unity in $\overline{\bbQ}$ or in
$\overline{\bbQ_l}$, where $\gcd(l,n)=1$. Then
$$R(n)=\left\{\begin{array}{ccl} 0&{\rm if}& n\,{\rm has\,\, a\,\, square\,\, factor},\\
(-1)^t&{\rm if}& n\,{\rm is\,\, a\,\, product\,\, of}\,\,t\,\,{\rm distinct\,\, primes}.\\
\end{array} \right.$$
\end{lemma}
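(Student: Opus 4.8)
The plan is to show that $R(n)$ coincides with the Möbius function, by proving that $R$ is multiplicative and then evaluating it at prime powers. The whole argument is a combinatorial count inside the set of roots of $x^n-1$. Both $\overline{\bbQ}$ and $\overline{\bbQ_l}$ have characteristic $0$, so in each field $x^n-1$ is separable with exactly $n$ distinct roots, each of a well-defined order dividing $n$; the number of roots of a given order is therefore the same in the two fields, and a single argument settles both cases. I will use repeatedly the elementary fact that the sum of \emph{all} $m$-th roots of unity is minus the coefficient of $x^{m-1}$ in $x^m-1$, hence equals $1$ for $m=1$ and $0$ for $m\geq 2$.

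First I would prove multiplicativity: if $\gcd(m,n)=1$ then $R(mn)=R(m)R(n)$. The key point is that $(\omega,\eta)\mapsto\omega\eta$ is a bijection from the set of pairs consisting of a primitive $m$-th root $\omega$ and a primitive $n$-th root $\eta$ onto the set of primitive $mn$-th roots. Indeed $\omega\eta$ has order $\mathrm{lcm}(m,n)=mn$; conversely, choosing integers $a,b$ with $an+bm=1$, any primitive $mn$-th root $\zeta$ factors as $\zeta=(\zeta^{n})^{a}(\zeta^{m})^{b}$ with $(\zeta^{n})^{a}$ a primitive $m$-th root and $(\zeta^{m})^{b}$ a primitive $n$-th root, and the pair is uniquely determined by $\zeta$. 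Summing over the two factors independently gives
$$R(mn)=\sum_{\omega,\eta}\omega\eta=\Big(\sum_{\omega}\omega\Big)\Big(\sum_{\eta}\eta\Big)=R(m)R(n).$$

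It then remains to evaluate $R$ at a prime power $p^{k}$. For $k=1$, the only non-primitive $p$-th root of unity is $1$, so $R(p)=0-1=-1$. For $k\geq 2$, the non-primitive $p^{k}$-th roots are exactly the $p^{k-1}$-th roots, so $R(p^{k})$ is the difference of the two (vanishing) full sums, giving $R(p^{k})=0-0=0$. Writing $n=\prod_i p_i^{a_i}$ and applying multiplicativity yields $R(n)=\prod_i R(p_i^{a_i})$, which is $0$ as soon as some $a_i\geq 2$, that is, whenever $n$ has a square factor, and equals $(-1)^{t}$ when $n$ is a product of $t$ distinct primes. This is the asserted formula. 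Equivalently, one may avoid multiplicativity altogether: partitioning the $n$-th roots of unity by their exact order gives $\sum_{d\mid n}R(d)=1$ for $n=1$ and $0$ for $n\geq 2$, which is the defining recursion of the Möbius function and pins down $R$ uniquely by Möbius inversion. There is no substantial obstacle in either route; the only step warranting a moment's care is the bijection underlying multiplicativity, which rests on $\gcd(m,n)=1$.
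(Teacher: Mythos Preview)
Your proof is correct and is the standard argument that the Ramanujan sum at $1$, i.e.\ the sum of primitive $n$-th roots of unity, equals the M\"obius function $\mu(n)$. The paper itself does not prove this lemma at all: it is stated with a reference to \cite[Lemma 2.11]{K} and used as a black box, so there is no in-paper argument to compare against. Your write-up, including the care taken over separability in characteristic $0$ and the alternative via the divisor-sum identity $\sum_{d\mid n}R(d)=[n=1]$ and M\"obius inversion, is entirely adequate and in fact more detailed than what the paper provides.
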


For an automorphism $g$ of finite order of a K3 surface $X$, tame or wild, we
write
$${\rm ord}(g)=m.n$$ if $g$ is of order $mn$ and the
natural homomorphism $\langle g\rangle\to {\rm GL}(H^0(X,
\Omega^2_X))$ has kernel of order $m$ and image of order $n$.

\section{Proof: the Tame Case}

Throughout this section, we assume that the characteristic $p>0$,
$p\neq 2$, 5. Let $g$ be an automorphism of order $50$ of a K3
surface. We first determine the list of eigenvalues of $g^*$ acting on the second cohomology of $X$.

\begin{lemma}\label{no1} $[g^{*}]\neq [1,\,1,\,
\zeta_{50}:20]$.
\end{lemma}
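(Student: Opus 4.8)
The plan is to read off from the hypothetical list $[g^*]=[1,1,\zeta_{50}:20]$ the action of the involution $h:=g^{25}$, to force the Hirzebruch/elliptic picture of Lemma \ref{nsym2}, and then to contradict it by confining the fixed locus of $g^{10}$ to two fibres of an elliptic pencil while its Euler number is negative. First I would compute $[h^*]$: every primitive $50$th root $\zeta_{50}^k$ has $k$ odd, so $(\zeta_{50}^k)^{25}=-1$, giving $[h^*]=[1,1,(-1).20]$ and $\dim H^2_{\rm et}(X,\bbQ_l)^h=2$. Hence Lemma \ref{nsym2} applies to $h$: it is non-symplectic, $X/\langle h\rangle\cong {\bf F}_e$, and there is an $h$-invariant elliptic fibration $\psi:X\to {\bf P}^1$, the pull-back of the ruling of ${\bf F}_e$, all of whose fibres have arithmetic genus $1$ (the singular ones being of type $I_1,I_2,II,III$). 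Since $\psi$ factors through $X/\langle h\rangle$, the involution $h$ acts trivially on the base.

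Next I would make $\psi$ genuinely $g$-invariant and determine the order of the induced base action. As $h=g^{25}$ is central in $\langle g\rangle$, $g$ descends to $\bar g\in\Aut({\bf F}_e)$; for $e\ge 1$ the ruling is unique, and for $e=0$ the odd-order $\bar g$ cannot interchange the two rulings, so in either case the ruling, hence $\psi$, is $g$-invariant. Thus $g$ induces $\bar g\in\Aut({\bf P}^1)$ on the base, and since $h$ acts trivially there, ${\rm ord}(\bar g)$ divides $25$. I claim ${\rm ord}(\bar g)=25$. If it were $1$ (resp.\ $5$), then $g$ (resp.\ $g^5$) would preserve every fibre, in particular a smooth one $F$, and would act on the one-dimensional space $H^0(\Omega^1_F)$ by the same root of unity by which it acts on $H^0(\Omega^2_X)$, namely the primitive $50$th root $\zeta_{50}$ (resp.\ $\zeta_{50}^5=\zeta_{10}$), using that $g$ is purely non-symplectic of order $50$. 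But the automorphisms of a smooth genus-$1$ curve act on its space of regular $1$-forms through a cyclic group of order dividing $12$, and neither $50$ nor $10$ divides $12$ — a contradiction.

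Finally I would exploit $g^{10}$. Its base action is $\bar g^{10}$, of order $5$, so it has exactly two fixed points $P_1,P_2$ on ${\bf P}^1$; since $\psi$ is $g^{10}$-equivariant, any point of ${\rm Fix}(g^{10})$ maps to a fixed point of $\bar g^{10}$, whence ${\rm Fix}(g^{10})\subseteq F_{P_1}\cup F_{P_2}$, a union of two fibres of arithmetic genus $1$. By Proposition \ref{trace}, ${\rm Fix}(g^{10})$ is smooth, so each of its one-dimensional components is a smooth curve lying in a genus-$1$ fibre and is therefore rational or elliptic; with the isolated fixed points this forces $e(g^{10})=e({\rm Fix}(g^{10}))\ge 0$. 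On the other hand $(\zeta_{50}^k)^{10}=\zeta_5^k$ runs five times through each primitive $5$th root as $k$ ranges over the residues prime to $50$, so by Lemma \ref{sum}, $\Tr(g^{10*}|H^2_{\rm et})=2+5R(5)=-3$, and Proposition \ref{trace} gives $e(g^{10})=2+(-3)=-1<0$ — the desired contradiction.

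I expect the main obstacle to be the step pinning down ${\rm ord}(\bar g)=25$. One must check that a smooth $g$- or $g^5$-invariant fibre actually exists (it does: these automorphisms then act trivially on the base, and almost all fibres are smooth), and that the bound on how an automorphism of a genus-$1$ curve acts on its $1$-forms survives in the small characteristic $p=3$ permitted here, where automorphism groups are larger but the induced action on $H^0(\Omega^1)$ still has order in $\{1,2,3,4,6\}$, so that orders $10$ and $50$ remain excluded.
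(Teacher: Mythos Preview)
Your argument is correct and reaches the contradiction by a genuinely different route from the paper.

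The paper first computes $e(g^{25})=-16$, $e(g^5)=9$, $e(g)=4$, and then splits according to the two possibilities in Lemma~\ref{nsym2}. If ${\rm Fix}(g^{25})=C_9$, the paper applies Hurwitz to the degree-$25$ cover $C_9\to C_9/\langle g\rangle$: from $e(g)=4$ and $e(g^5)=9$ one reads off $4$ points of ramification index $25$ and $5$ of index $5$, which is incompatible with $g(C_9)=16$. If ${\rm Fix}(g^{25})=R\cup C_{10}$ (so $e=4$), the singular fibres are $I_1$ or $II$ and there are at least $12$ of them; since the number is $\le 24<25$, every $g$-orbit of singular fibres on the base has length $1$ or $5$, so $g^5$ fixes each singular fibre and hence its unique singular point. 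This produces $\ge 12$ isolated fixed points of $g^5$ on $C_{10}$, contradicting $e(g^5)=9$.

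You instead treat both shapes of ${\rm Fix}(h)$ at once. After descending $g$ to ${\bf F}_e$ and making $\psi$ $g$-invariant, you determine ${\rm ord}(\bar g|{\bf P}^1)=25$ by a representation-theoretic constraint: restriction of the global $2$-form to a smooth invariant fibre shows the character on $H^0(\Omega^1_F)$ would be a primitive $50$th (resp.\ $10$th) root of unity, impossible since this character has order dividing $|\Aut(F,O)|\in\{2,4,6,12\}$ in characteristic $p\neq 2,5$. Then ${\rm Fix}(g^{10})$ is confined to two fibres of arithmetic genus $1$; its smooth one-dimensional components are rational or elliptic, so $e(g^{10})\ge 0$, against the computed $e(g^{10})=-1$.

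What each approach buys: the paper stays entirely within Euler-number bookkeeping, Hurwitz, and singular-fibre counting, never needing the $2$-form/$1$-form restriction or facts about $\Aut$ of genus-$1$ curves. Your argument avoids the case split and the Hurwitz computation, at the cost of importing the purely-non-symplectic conclusion from \cite{K} and the (standard) bound on the order of the character of an elliptic-curve automorphism on its $1$-forms, which you correctly note must be checked separately in characteristic~$3$.
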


\begin{proof}
Suppose that $[g^{*}]= [1,\,1,\,
\zeta_{50}:20]$. Then $$[g^{25*}]= [1,\,\,1,\,
-1.20], \quad e(g^{25})=-16.$$ Thus ${\rm Fix}(g^{25})$ is either a curve $C_9$ of genus
$9$ or the union of a smooth rational curve and a curve $C_{10}$ of genus $10$. Using Lemma \ref{sum}, we compute $e(g)=4$ and
$$[g^{5*}]= [1,\,1,\,(\zeta_{10}:4).5], \quad e(g^{5})=9.$$ Note that $${\rm Fix}(g^{d})\subset {\rm Fix}(g^{25})$$ for any $d$ dividing 25. If  ${\rm Fix}(g^{25})$ is a curve $C_9$ of genus $9$, then
the action of $g$ (resp. $g^5$) on $C_9$ has 4 (resp. 9) fixed points, hence the degree 25 cover $C_9\to C_9/\langle g^{} \rangle$ has 4 points of ramification index 25 and 5 points of ramification index 5, which contradicts the Hurwitz formula.
By Lemma \ref{nsym2}, the quotient surface $X/\langle g^{25} \rangle$ is isomorphic to the rational ruled surface ${\bf F}_4$,  $X$ has a $g^{25}$-invariant elliptic
fibration $$\psi:X\to {\bf P}^1$$ and
${\rm Fix}(g^{25})$ is the union of a section $R$ and a curve $C_{10}$ of genus $10$ which
is a $3$-section of $\psi$. We also know that a fibre of $\psi$ is of type $I_0$ (smooth), $I_1$ or $II$.
The automorphism $\bar{g}$ of ${\bf F}_4$ induced by $g$ preserves the unique ruling, so $g$ preserves the elliptic fibration. 
Since $\bar{g}^{25}$ acts trivially on ${\bf F}_4$,
$g^{25}$ acts trivially on the base ${\bf P}^1$ and the orbit of a singular fibre under the action of  $g|{\bf P}^1$ has length 1 or 5. Thus $g^5|{\bf P}^1$ fixes all singular fibres and $g^5$ fixes the singular points of all singular fibres. Since $\psi$ has at least 12 singular fibres, $g^5$ fixes at least 12 points, contradicting $e(g^5)=9$.
\end{proof}

\begin{lemma}\label{50}
\begin{enumerate}
\item $[g^{*}]=[1,\,-1,\,\zeta_{50}:20]$\\ where the first eigenvalue corresponds to a $g^*$-invariant ample
class;
\item ${\rm Fix}(g^{25})=C_{10}$, a curve of genus $10$;
\item ${\rm Fix}(g^{10})=D_2\cup \{\,q\}$ where $D_{2}$ is a curve of genus $2$ intersecting $C_{10}$ at $6$ points, say $p_1,\ldots, p_6$, and $q$ is a point on $C_{10}$, but not on $D_2$;
\item ${\rm Fix}(g^{5})= \{\,p_1,\ldots, p_6,\, q \}$;
\item $g$ fixes one of the six points $p_i$, say $p_6$,  rotates the remaining 5, and ${\rm Fix}(g^{})= \{\, p_6,\, q \}$.
\end{enumerate}
\end{lemma}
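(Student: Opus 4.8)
The plan is to first pin down $[g^*]$, then read off every fixed locus from Euler characteristics, the containment relations of Lemma \ref{fix}, local tangent data, and Riemann--Hurwitz. Since $g$ is purely non-symplectic (part (1) of Theorem \ref{main}, already established), $g^*$ acts on $H^0(X,\Omega_X^2)$ by a primitive $50$th root $\zeta_{50}$, so by Proposition \ref{integral}(3),(1) all $\phi(50)=20$ primitive $50$th roots occur, i.e. $[\zeta_{50}:20]\subset[g^*]$. This accounts for $20$ of the $b_2=22$ eigenvalues; the remaining two form $g^*$-conjugate blocks of total size $2$, and since every conjugacy block of a $50$th root other than $\{1\},\{-1\}$ has size $\ge 4$, they lie in $\{\pm1\}$. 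Proposition \ref{integral}(2) forces the invariant ample class $1$ to occur, so $[g^*]$ is $[1,1,\zeta_{50}:20]$ or $[1,-1,\zeta_{50}:20]$; Lemma \ref{no1} excludes the first, giving part (1). With this list, Proposition \ref{trace} and Lemma \ref{sum} yield $e(g)=2$, $e(g^2)=4$, $e(g^5)=7$, $e(g^{10})=-1$, $e(g^{25})=-18$, used throughout.

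For (2) set $h:=g^{25}$, a non-symplectic involution, so at each fixed point $h$ acts on the tangent space with determinant $-1$; hence $\text{Fix}(h)$ is a disjoint union of smooth curves with no isolated points and $e(\text{Fix}(h))=-18$. From $[h^*]=[1,-1.21]$ the invariant cohomology $H^2_{\et}(X,\bbQ_l)^{h}$ is one-dimensional, so the quotient $Y=X/\langle h\rangle$ is a smooth rational surface with $b_2(Y)=1$, i.e. $Y\cong\bbP^2$. Thus $X\to\bbP^2$ is a double cover branched along a smooth curve, and the K3 condition forces that curve to be a smooth plane sextic; therefore $\text{Fix}(h)$ is a single connected curve $C_{10}$ of genus $10$, proving (2). (It is precisely the exclusion in Lemma \ref{no1} that makes $Y=\bbP^2$ rather than a ruled surface.)

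Next, $g$ acts on $C_{10}$ through $\bar g$ of order exactly $25$: order $1$ would give $C_{10}\subset\text{Fix}(g)$ and order $5$ would give $C_{10}\subset\text{Fix}(g^5)$, both impossible as $\text{Fix}(g)$, $\text{Fix}(g^5)$ are finite. By Lemma \ref{fix}(2), $\text{Fix}(g^{10})\cap C_{10}=\text{Fix}(g^5)$, a set of $7$ points (no fixed curve of $g^5$ occurs, since $C_{10}$ is irreducible and $\bar g^5\neq\text{id}$). At such a point $p$ the tangent eigenvalues $(\mu_1,\mu_2)$ of $g^5$ satisfy $\mu_1^5=1$, $\mu_2^5=-1$ because $(g^5)^5=h$ acts as $\text{diag}(1,-1)$, and $\mu_1$ is a primitive $5$th root (the tangent eigenvalue of $\bar g^5$ along $C_{10}$). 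Since $g^{10}=(g^5)^2$ has tangent eigenvalues $(\mu_1^2,\mu_2^2)$, the point lies on a $g^{10}$-fixed curve meeting $C_{10}$ transversally when $\mu_2=-1$, and is isolated in $\text{Fix}(g^{10})$ when $\mu_2$ is a primitive $10$th root. Showing that exactly six of the seven points are of the first kind, lying on a single genus-$2$ curve $D_2$, and exactly one, $q$, is isolated, gives (3); then (4) follows from $\text{Fix}(g^5)=\text{Fix}(g^{10})\cap C_{10}=\{p_1,\dots,p_6,q\}$.

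Finally, $g$ permutes the seven points of $\text{Fix}(g^5)$; as $g^5$ fixes all of them the orbits have size $1$ or $5$, and exactly the two points of $\text{Fix}(g)$ are fixed. Because $g$ preserves $C_{10}=\text{Fix}(g^{25})$ and the unique genus-$2$ component $D_2$ of $\text{Fix}(g^{10})$, it preserves $D_2\cap C_{10}=\{p_1,\dots,p_6\}$ and fixes the unique isolated point $q$; hence one $p_i$ (say $p_6$) is fixed and the other five form a single $5$-cycle, so $\text{Fix}(g)=\{p_6,q\}$, proving (5). The main obstacle is the counting in (3): proving that the $g^{10}$-fixed curve is \emph{connected of genus $2$}, meets $C_{10}$ in \emph{exactly six} points, and that $\text{Fix}(g^{10})$ has \emph{exactly one} isolated point with none off $C_{10}$. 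I expect to settle this by combining $e(g^{10})=-1$ with Riemann--Hurwitz for the induced actions of $g$, $g^2$, $g^5$ on the fixed curves --- for instance $g^5|_{D_2}$ must be an involution with six fixed points, forcing genus $2$ and quotient $\bbP^1$ --- together with the local eigenvalue bookkeeping above to rule out stray components or points.
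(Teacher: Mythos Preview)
Your arguments for (1), (2) and --- once (3) is granted --- for (4) and (5) are correct and essentially match the paper's. The divergence is entirely in (3), where the paper's decisive tool is the \emph{Hodge index theorem}, which you never invoke. The paper argues as follows: since $e(g^{10})=-1$, the locus $\text{Fix}(g^{10})$ contains a curve $D$ of genus $\ge 2$; since $\dim H^2(X)^{g^{10}}=2$, at most one rational component can occur. Writing $\text{Fix}(g^{10})=D_{d+2}\cup\{2d+1\text{ points}\}$ (or the variant with an extra rational curve), the containment $C_{10}\cap D\subset\text{Fix}(g^5)$ gives $C_{10}\cdot D\le 7$, and then Hodge index yields $18\,D^2=C_{10}^2\,D^2\le(C_{10}\cdot D)^2\le 49$, forcing $D^2=2$, i.e.\ genus $2$, and simultaneously $C_{10}\cdot D\ge 6$. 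The single leftover point is $q$.

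Your proposed substitute --- Riemann--Hurwitz plus local eigenvalue bookkeeping --- is circular as written: the sentence ``$g^5|_{D_2}$ must be an involution with six fixed points, forcing genus $2$'' presupposes $|D\cap C_{10}|=6$, which is precisely what has to be proved. With only $e(g^{10})=-1$ and Riemann--Hurwitz for $g^5|_D$, a genus-$3$ curve meeting $C_{10}$ in $4$ points (quotient elliptic) together with $3$ isolated points on $C_{10}$ is equally consistent; so are configurations with extra isolated points off $C_{10}$ or with a rational component. Some of these \emph{can} be killed by orbit-counting under $g$ (for instance, three isolated points on $C_{10}$ lie in $g$-orbits of size $1$ or $5$, hence are all $g$-fixed, contradicting $|\text{Fix}(g)|=e(g)=2$), and bringing in $e(g^2)=4$ one can probably finish --- but none of this is carried out in your proposal. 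The missing idea, which the paper supplies, is the Hodge index inequality $C_{10}^2\,D^2\le(C_{10}\cdot D)^2$; it caps the genus of $D$ in one stroke and makes the case analysis unnecessary.
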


\begin{proof} (1) By \cite[Lemmas 4.2 and 4.7]{K}, $g$ cannot be of order $2.25$ or $5.10$, hence is purely non-symplectic. By Proposition
\ref{integral} the action of $g^*$ on $H_{\rm et}^2(X,\bbQ_l)$ has
$\zeta_{50}\in \overline{\bbQ_l}$ as an eigenvalue. Thus
$[g^{*}]=[1,\,\pm 1,\, \zeta_{50}:20]$ and the result follows from Lemma \ref{no1}. 

(2) follows from (1), since $e(g^{25})=-18$ and the  invariant subspace of $g^*|H_{\rm et}^2(X,\bbQ_l)$ has dimension 1.

(3)-(5) We compute 
$$[g^{5*}]= [1,\,-1,\,(\zeta_{10}:4).5], \quad e(g^{5})=7,$$
$$[g^{10*}]= [1,\,1,\,(\zeta_{5}:4).5], \quad e(g^{10})=-1.$$ Since $e(g^{10})<0$,  
${\rm Fix}(g^{10})$ contains a curve of genus $>1$. Since the  invariant subspace of $g^{10*}|H_{\rm et}^2(X,\bbQ_l)$ has dimension 2, ${\rm Fix}(g^{10})$ contains at most one smooth rational curve. Since $e(g^5)=7$, ${\rm Fix}(g^{5})$ consists of 7 points of $C_{10}={\rm Fix}(g^{25})$. Suppose  ${\rm Fix}(g^{10})$ contains a rational curve $R$. Then 
$${\rm Fix}(g^{10})=R\cup D_{d+3}\cup\{ 2d+1\,\,{\rm points}\},\,\,d\ge 0.$$  Since
$C_{10}\cap D_{d+3}\subset {\rm Fix}(g^{25})\cap{\rm Fix}(g^{10})={\rm Fix}(g^{5}),$ we have
$$C_{10}D_{d+3}\le 7,$$ then by Hodge index theorem
$$18(2d+4)=C_{10}^2D_{d+3}^2\le(C_{10}D_{d+3})^2\le 7^2,$$ hence $d<0$, absurd. Thus ${\rm Fix}(g^{10})$ cannot contain a rational curve and
$${\rm Fix}(g^{10})=D_{d+2}\cup\{ 2d+1\,\,{\rm points}\},\,\,d\ge 0.$$ In the same way as above, we see that
$C_{10}D_{d+2}\le 7$. Then by the Hodge index theorem 
$$18(2d+2)=C_{10}^2D_{d+2}^2\le(C_{10}D_{d+2})^2\le 7^2,$$ hence $d=0$ and $6\le C_{10}D_{2}$.  Let $$q\in {\rm Fix}(g^{10})$$  be the isolated point. Then $g(q)=q$  and $g^5$ fixes 6 points on $D_2$. Then $C_{10}D_{2}\le 6$ as
$C_{10}\cap D_{2}\subset {\rm Fix}(g^{25})\cap{\rm Fix}(g^{10})={\rm Fix}(g^{5})$. Thus $C_{10}D_{2}=6$. Let $p_1,\ldots,p_6$ be the 6 intersection points of $C_{10}$ and $D_{2}$. Then $g^5$ fixes the 7 points, $p_1,\ldots,p_6$ and $q$. This proves (3) and (4). Since $e(g)=2$, the action of $g$ on $\{\,p_1,\ldots,p_6\,\}$
fixes one and rotates five, proving (5)
\end{proof}

\bigskip

\noindent {\bf Proof of the second statement of Theorem \ref{main}.} 

\bigskip
Lemma \ref{50} plays a key role in the proof. We modify the proof of \cite[Section 4]{MO}.
The quotient
$$X/\langle g^{25}\rangle$$ is a smooth rational surface with Picard number 1, hence is isomorphic to ${\bf P}^2$. The branch curve 
 $$B:=\bar{C}_{10}\subset {\bf P}^2$$ is a smooth sextic and the image
$$L:=\bar{D}_{2}\subset {\bf P}^2$$ of $D_2$ is a line.
Let $$\bar{p}_i, \,\,\bar{q}\in {\bf P}^2$$ be the images of $p_i$ and $q$.
Choose coordinates $x, y, z$ of ${\bf P}^2$ such that
$$L= (z=0), \quad \bar{q}=(0,0,1).$$
Our automorphism $g$ induces an automorphism $\bar{g}$ of ${\bf P}^2$. The fixed locus of $\bar{g}^{5}$
is the image of 
$${\rm Fix}(g^{5})\cup {\rm Fix}(g^{20})={\rm Fix}(g^{20})={\rm Fix}(g^{10}),$$ thus $${\rm Fix}(\bar{g}^{5})=L\cup \{\bar{q}\}.$$ The fixed locus of $\bar{g}^{}$
is the image of 
$${\rm Fix}(g^{})\cup {\rm Fix}(g^{24})={\rm Fix}(g^{24})={\rm Fix}(g^{2})=\{\, p_6,\, q, \,q_1,\,q_2 \} $$ where $q_1$ and $q_2$ are two points of $D_2$ which are interchanged by $g$, thus $${\rm Fix}(\bar{g}^{})=\{\bar{p}_6,\,\bar{q},\,\bar{q}_1 \}.$$ We further may assume that 
$$\bar{p}_6=(0,1,0),\quad\bar{q}_1=\bar{q}_2=(1,0,0).$$
From these, we infer that
$$\bar{g}(x,\,y,\,z)=(x,\, \zeta_{25}^{20}y,\, \zeta_{25}^{j}z)$$ for some $j$. Since $\bar{g}$ has order 25, $5\nmid j$.  The monomials $x^6$ and $xy^5$ are $\bar{g}$-invariant. We know that the branch $B=\bar{C}_{10}$ is a smooth sextic. Thus there must exist a $\bar{g}$-invariant monomial of the form $y^az^{6-a}$. Then $20a+j(6-a)\equiv 0$ mod 25. Since $5\nmid j$, $a=1$ and $j=5i+1$ for some $i$. Then 
$$(\zeta_{25}^{j})^{20}=(\zeta_{25}^{5i+1})^{20}=\zeta_{25}^{20},$$ we may assume that $j=1$. The branch $B$ is defined by $x^6+\alpha xy^5+\beta yz^5=0$ for some non-zero $\alpha$ and $\beta$. Replacing $y$ and $z$ by a scalar multiple, we may assume that  $\alpha=\beta=1$. Now the surface $X$ and the automorphism $g$ are defined by 
$$X: w^2=x^6+ xy^5+ yz^5$$
$$g(x,\,y,\, z,\, w)=(x,\, \zeta_{25}^{20}y,\, \zeta_{25}z,\,-w).$$

\section{Proof: the Complex Case}

We may assume that $X$ is projective, since a non-projective complex K3 surface cannot admit a non-symplectic
automorphism of finite order (\cite{Ueno}, \cite{Nik}) and
its automorphisms of finite order are symplectic, hence of order
$\le 8$.  Now the same  proof goes, once $H^2_{\rm
et}(X,\bbQ_l)$ is replaced by $H^2(X,\bbZ)$
and Proposition \ref{trace} by the usual topological Lefschetz
fixed point formula. 




\end{document}